\documentclass[12pt,psamsfonts]{amsart}

\newtheorem{theorem}{Theorem}[section]
\newtheorem{lemma}[theorem]{Lemma}

\theoremstyle{definition}

\theoremstyle{remark}

\numberwithin{equation}{section}

\usepackage{graphicx}
\usepackage{amssymb}
\usepackage{amscd}
\usepackage{amsmath}
\usepackage{amsfonts}
\usepackage{amsbsy}
\usepackage{epsfig}

\begin{document}

\title[Liouvillian integrability of differential systems]{Liouvillian integrability of polynomial differential systems}



\author[X. Zhang]{Xiang Zhang}
\address{Department of Mathematics, and MOE-LSC, Shanghai Jiao Tong University,  Shanghai 200240,  People's Republic of China.}
\email{xzhang@sjtu.edu.cn}

\subjclass[2010]{34A34, 37C10, 34C14, 37G05. }

\date{}

\dedicatory{}

\keywords{Liouville integrability; Darboux integrability; Jacobian multiplier; Galois group.\\ \qquad To appear in {\it Transactions of the American Mathematical Society}}

\begin{abstract}
M.F. Singer [{\it Liouvillian first integrals of differential equations}, Trans. Amer. Math. Soc. 333 (1992), 673--688] proved the equivalence between  Liouvillian integrability and Darboux integrability for two dimensional polynomial differential systems. In this paper we will extend Singer's result to any finite dimensional polynomial differential systems. We prove that if an $n$--dimensional polynomial differential system has $n-1$ functionally independent Darboux Jacobian multiplier then it has $n-1$ functionally independent Liouvillian first integrals. Conversely if the system is Liouvillian integrable then it has a Darboux Jacobian multiplier.
\end{abstract}

\maketitle

\bigskip

\section{Background and statement of the main results}\label{s1}

The theory of integrability for differential systems is classic and it is useful in the study of dynamics of differential system. Integrability has different definitions in different fields. Here we mainly concern the algebraic aspects of integrability for polynomial differential systems, which involves analysis, algebraic geometry, the field extension and so on. For further information on this subject, we refer readers to Daboux~\cite{Da, Da1}, Jouanolou~\cite{Jo1979}, Prelle and Singer \cite{PS1983}, Singer \cite{Si1992}, Schlomiuk~\cite{Sc1993}, ¡¡Llibre~\cite{Ll2004},  Dumortier~and ~Llibre~{\it et al} ~\cite{DLA2006}, Christopher~{\it et al}~\cite{Chr1999, CLP2007} and Llibre~and~Zhang~\cite{LZ2009, LZ2012, PZ2013}.

Darboux theory of integrability was established by Darboux \cite{Da, Da1} in 1878 for polynomial differential systems of degree $n$ by using the invariant algebraic curves (resp. surfaces or hypersurfaces) in dimension 2 (resp. 3 or $n>3$). Jouanolou \cite{Jo1979} in 1979 extended the Darboux's theory to construct rational first integrals with the help of algebraic geometry. An elementary proof of Jouanolou's result was provided respectively by Christopher and Llibre \cite{CL2000} in 2000 for two dimensional differential systems and by Llibre and Zhang \cite{LZ2010BSM} in 2010 for any finite dimensional differential systems. On further extensions to Darboux theory of integrability, Christopher, Llibre and Pereira \cite{CLP2007} in 2007 took into account not only the number of invariant algebraic curves but also their multiplicities for two dimensional differential systems. Llibre and Zhang \cite{LZ2009} further extended Christopher {\it et al}'s result in \cite{CLP2007} to any finite dimensional differential systems, where there are some deep characterizations on the number of exponential factors and the multiplier of invariant algebraic hypersurfaces. Darboux theory of integrability has important applications in the center--focus problem, dynamical analysis and so on, see for instance \cite{CL2007,Ll2004,Sc1993,Zh2011} and the reference therein.

Darboux theory of integrability has a nice extension to Weierstrass integrability, see e.g. \cite{GG10}, where they used Weierstrass polynomials to replace the usual polynomials. By definition the former include the latter as a special one. In \cite{BP12} Bl\'{a}zquez--Sanz and Pantazi provided a new approach to study the Darboux integrability of polynomial differential systems of degree $m$, where they replaced the dimension of $\mathbb C_{m-1}[x]$ which the cofactors of Darboux polynomials and exponential factors are located in by the rank of a matrix associated to these cofactors. Here $\mathbb C_{m-1}[x]$ denotes the linear space formed by polynomials in $x\in\mathbb C^n$ of degree no more than $m-1$.  Recently Darboux theory of integrability was also successfully extended to nonautonomous differential systems which are polynomial ones in space variables with coefficients the smooth functions of the time, see e.g. \cite{LP09, GGL13}, where they extended the notion of invariant algebraic hypersurfaces in the phase space to polynomial invariant hypersurfaces in the extended space including the time.

Prelle and Singer \cite{PS1983} in 1983 proved that if a polynomial differential system has an elementary first integral then it has a first integral of a very simple form. As a corollary of their results, one gets that if a planar polynomial differential system has an elementary first integral, then it has an integrating factor of the form $f_1^{m_1}\ldots f_p^{m_p}$ with $f_i\in \mathbb C[x,y]$ and $m_i\in \mathbb Z$. This shows the equivalence between the existence of elementary first integrals and the Darboux integrating factors for planar polynomial differential systems.

Singer \cite{Si1992} in 1992 proved that a planar polynomial differential system has a Liouvillian first integral if and only if it has an integrating factor of the form
\[
R(x,y)=\exp\left(\int U(x,y)dx+V(x,y)dy\right),
\]
where $U(x,y),V(x,y)$ are rational functions in $x,y$. For a simple proof to Singer's result, see \cite{Chr1999} and \cite[Theorem 3.2]{CL2007}.

In this paper we will extend Singer's result to any finite dimensional polynomial differential systems.

Consider polynomial differential systems
\begin{equation}\label{2.1.1}
\dot x=P(x), \qquad x \in\mathbb C^n,
\end{equation}
where $P(x)=(P_1(x),\ldots,P_n(x))$~are vector--valued polynomial functions. We call $m:=\max\{\mbox{deg}P_1,\ldots,\mbox{deg}P_n\}$~\textit{ the degree of polynomial differential systems ~\eqref{2.1.1}}. In what follows we also use
\[
\mathcal X_P=P_1(x)\frac{\partial }{\partial x_1}+\ldots+P_n(x)\frac{\partial }{\partial x_n},
\]
to represent the vector field associated to system \eqref{2.1.1}. For simplifying notations, in what follows we denote ${\partial }/{\partial x_i}$ by $\partial_i$ and $(\partial_1,\ldots,\partial_n)$ by $\partial $.

Denote by $\mathbb C[x]$ the ring of polynomials in $x$ with coefficients in $\mathbb C$. A polynomial $f(x)\in\mathbb C[x]$ is called a {\it Darboux polynomial} of $\mathcal X_P$ if there exists a $k(x)\in \mathbb C[x]$ such that
\[
\mathcal X_P(f)(x)=k(x) \, f(x), \qquad x\in \mathbb C^n.
\]
The polynomial $k$ is called {\it cofactor} of $f$.
A function of the form
\[
\exp\left(\frac{g}{h}\right)f_1^{l_1}\ldots f_r^{l_r}, \quad \mbox{ with } g,\, h,\,  f_i\in \mathbb C[x],\,\, \, l_i\in \mathbb C,\,\, i=1,\ldots,r
\]
is called a {\it Darboux function}. For a Darboux function, we always require that its factors $f_i$ are irreducible and relatively different, and $g,h$ are relative coprime.

A {\it Darboux first integral} of \eqref{2.1.1} is a Darboux function and it is a first integral of \eqref{2.1.1}. Note that a first integral is not necessary to be defined in the full space but in a full Lebesgue measure subset of $\mathbb C^n$. System \eqref{2.1.1} is {\it Darboux integrable} if it has $n-1$ functionally independent Darboux first integrals.

A smooth function $J(x)$ is a {\it Jacobian multiplier} of system \eqref{2.1.1} if
\[
\partial_1(JP_1)+\ldots+\partial_n(JP_n)=0.
 \]
A {\it Darboux Jacobian multiplier} of system \eqref{2.1.1} is a Jacobian multiplier of the system and it is a Darboux function.
For planar polynomial differential systems, a Jacobian multiplier is usually called an {\it integrating factor}.
If a planar polynomial differential system has a Darboux integrating factor, it is also called  Darboux integrable.

For stating our results we recall the definition of Liouvillian functions.

A \textit{differential field } $(K,\,\Delta)$ consists of the field  $K$ and the set $\Delta$ of commutative derivatives defined on $K$. In this paper all mentioned fields have characteristic $0$.

A \textit{differential field extension} of a differential field $(K,\,\Delta)$ is a differential field $(L,\,\Delta')$ with the properties that $K\subset L$ and for  $\forall \, \delta'\in \Delta'$  we have $\left.\delta'\right|_{K}\in\Delta$. Because of the relation between the derivatives of differential field   $(K,\,\Delta)$ and its field extension $(L,\,\Delta')$, we also use $\Delta$ to represent $\Delta'$. For simplifying notations we also use $L/K$ to denote the differential field extension $(L,\Delta)$ of $(K,\Delta)$.

For a field extension $L/K$,
\begin{itemize}
\item $\alpha\in L$ is called
\begin{itemize}
\item \textit{an algebraic element of $K$}, if there exists a polynomial with coefficients in $K$ such that $F(\alpha)=0$.
\item a \textit{transcendental element of $K$}, if  $\alpha$ is not an algebraic element over $K$.
\end{itemize}

\item If each element of  $L$ is algebraic over $K$, we call  \textit{~$L/K$ an algebraic extension of field $K$}.
\item $L$ can be considered as a \textit{vector space over $K$}: the elements of $L$ are treated as vectors, and elements of $K$ are treated as scalars, and the summation of vectors is that of elements of field and the product of elements of $L$ and $K$ is that of elements of field $L$.
\begin{itemize}
\item The dimension of this vector space is called \textit{degree of this differential field extension}, denoted by $[L:K]$.
\end{itemize}
\item If  $[L:K]\in \mathbb N$, we call \textit{$L/K$ finite field extension}.
\item Let~$S\subset L$,
\begin{itemize}
\item $K(S)$ denotes the minimal subfield of $L$ including $K$ and $S$.
\item If $S$ contains only one element, we call  \textit{~$K(S)$ the minimal field extension of $K$}.
\end{itemize}
\end{itemize}

A differential field extension $L/K$ is \textit{Liouvillian}, if this differential field extension can be written in the tower form
\[
K=K_0\subset K_1\subset\ldots\subset K_r=L,
\]
such that
\begin{itemize}
\item[$(a)$] $K_{i+1}$ is a finite algebraic extension of $K_i$, or
\item[$(b)$] $K_{i+1}=K_i(t)$, where  $t$ is a transcendental element of $K_i$ satisfying: for each $\delta\in \Delta$, $\dfrac{\delta t}{t}\in K_i$, or
\item[$(c)$] $K_{i+1}=K_i(t)$, where $t$ is a transcendental element of $K_i$ satisfying: for $\delta\in \Delta$, $\delta t\in K_i$.
\end{itemize}

A {\it Liouvillian first integral} of \eqref{2.1.1} is a Liouvillian function and is a first integral of \eqref{2.1.1}. System \eqref{2.1.1} is {\it Liouvillian integrable} if it has $n-1$ functionally independent Liouvillian first integrals.

Now we can state our main results. The first one characterizes the existence of Liouvillian first integrals via Darboux Jacobian multipliers.

\begin{theorem}\label{th1}
If polynomial differential system \eqref{2.1.1} has $n-1$ functionally independent Darboux Jacobian multipliers, then they have $n-1$ functionally independent Liouvillian first integrals.
\end{theorem}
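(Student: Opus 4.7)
The plan is to produce $n-2$ first integrals as ratios of the given Jacobian multipliers and then to construct the $(n-1)$-st via Jacobi's last multiplier theorem, verifying at each step that the resulting functions belong to a Liouvillian extension of $\mathbb C(x)$.

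First I would observe that for any two Jacobian multipliers $J,\tilde J$ of $\mathcal X_P$,
\[
\mathcal X_P(\tilde J/J)=\frac{J\,\mathcal X_P(\tilde J)-\tilde J\,\mathcal X_P(J)}{J^{2}}=\frac{-J\tilde J\operatorname{div}P+\tilde JJ\operatorname{div}P}{J^{2}}=0,
\]
using $\mathcal X_P(J_i)=-J_i\operatorname{div}P$. Hence $H_i:=J_{i+1}/J_1$, $i=1,\ldots,n-2$, are Darboux first integrals (ratios of Darboux functions are Darboux). A short calculation with gradients, relying on the linear independence of $\nabla J_1,\ldots,\nabla J_{n-1}$ at a generic point, shows that $H_1,\ldots,H_{n-2}$ are themselves functionally independent.

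Next I would invoke \emph{Jacobi's last multiplier theorem} applied to the data $(H_1,\ldots,H_{n-2};\,J_1)$. At a generic point, pick indices $k,\ell$ so that $y_j=H_j$ for $j\le n-2$, $y_{n-1}=x_k$, $y_n=x_\ell$ is a local diffeomorphism; in these variables \eqref{2.1.1} becomes $\dot y_j=0$ for $j\le n-2$ together with a two-dimensional parametric system $\dot y_{n-1}=\tilde P_{n-1}(y)$, $\dot y_n=\tilde P_n(y)$. The multiplier $J_1$ transforms to a Jacobian multiplier $\tilde J$ of the 2D leaf system on each level set $\{y_1=c_1,\ldots,y_{n-2}=c_{n-2}\}$, because the divergence of the transformed field coincides with that of its $(y_{n-1},y_n)$-projection. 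Consequently the $1$-form
\[
\omega=\tilde J\,\tilde P_n\,dy_{n-1}-\tilde J\,\tilde P_{n-1}\,dy_n
\]
is closed along the leaves, and its primitive $H_{n-1}:=\int\omega$ is a further first integral, functionally independent of the $H_i$'s wherever $(\tilde P_{n-1},\tilde P_n)\neq 0$.

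The remaining task, and the main obstacle, is to verify that $H_{n-1}$ is Liouvillian. A Darboux function lies in the Liouvillian extension of $\mathbb C(x)$ obtained by successively adjoining $\ln f$ for each irreducible Darboux polynomial $f$ (a type-$(c)$ step, since $\delta\ln f=\delta f/f\in\mathbb C(x)$) and $\exp(g/h)$ (a type-$(b)$ step); one must check that $\tilde J,\tilde P_{n-1},\tilde P_n$ still lie in such an extension after the Darboux-algebraic change of coordinates, and that integrating the closed Liouvillian $1$-form $\omega$ adjoins only one further type-$(c)$ element. The ratio-plus-Jacobi construction is elementary; the technical heart of the argument is tracking the Liouvillian bookkeeping through the non-polynomial coordinate change so that $H_{n-1}$, together with the Darboux (and hence Liouvillian) integrals $H_1,\ldots,H_{n-2}$, delivers the claimed $n-1$ functionally independent Liouvillian first integrals.
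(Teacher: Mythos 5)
Your proposal follows essentially the same route as the paper: form the ratios of the multipliers to get $n-2$ Darboux (hence Liouvillian) first integrals, change coordinates so these ratios become $n-2$ of the new variables, reduce to a two-dimensional system on the leaves with an integrating factor inherited from one of the $J_i$'s, and obtain the last first integral as a line integral of the resulting closed $1$-form. The one step you flag but do not carry out --- that the transformed data $\tilde J,\tilde P_{n-1},\tilde P_n$ remain Liouvillian after the non-polynomial change of variables --- is exactly where the paper does its remaining work: it shows the inverse transformation $G^{-1}$ is Liouvillian by noting that $D_yG^{-1}(G(x))=(D_xG(x))^{-1}$ is Liouvillian since $G$ is, after which the Liouvillian nature of the integrand and of its primitive follows; with that check supplied your argument matches the paper's proof.
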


The next one shows that Liouvillian integrability implies the existence of Darboux Jacobian multipliers.

\begin{theorem}\label{th2}
If system \eqref{2.1.1} is Liouvillian integrable, i.e. it has $n-1$ functionally independent Liouvillian first integrals, then the system has
a Darboux Jacobian multiplier.
\end{theorem}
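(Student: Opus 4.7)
The plan is to first produce a Liouvillian Jacobian multiplier from the given first integrals, and then appeal to a higher-dimensional generalization of Singer's differential-Galois argument to replace it by a Darboux one.

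For the first step, let $H_{1},\ldots,H_{n-1}$ be the functionally independent Liouvillian first integrals and let $M=(\partial_{j}H_{i})$ be their $(n-1)\times n$ Jacobian matrix. Because the $H_{i}$ are first integrals, $MP=0$, and functional independence gives $\mathrm{rank}\,M=n-1$ on a full-measure open set, so $P$ spans $\ker M$. Setting $\Delta_{j}=(-1)^{j+1}\det M^{(j)}$, where $M^{(j)}$ omits the $j$-th column, the vector field $V=\sum_{j}\Delta_{j}\partial_{j}$ equals $J^{*}\mathcal{X}_{P}$ for the scalar $J^{*}$ determined by $J^{*}P_{j}=\Delta_{j}$ (any $j$ with $P_{j}\ne 0$). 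Because
\[
\iota_{V}(dx_{1}\wedge\cdots\wedge dx_{n})=dH_{1}\wedge\cdots\wedge dH_{n-1},
\]
applying $d$ to both sides and using $d^{2}=0$ gives $\mathcal{L}_{V}(dx_{1}\wedge\cdots\wedge dx_{n})=0$, i.e.\ $\mathrm{div}(J^{*}P)=0$. Thus $J^{*}$ is a Jacobian multiplier, and since it is a rational combination of partial derivatives of Liouvillian functions, $J^{*}$ lies in some Liouvillian extension $L$ of $K=\mathbb{C}(x_{1},\ldots,x_{n})$.

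The substantive step is to replace $J^{*}$ by a Darboux multiplier. Let $\omega^{*}=d\log J^{*}$; then $\iota_{\mathcal{X}_{P}}\omega^{*}=\mathcal{X}_{P}(\log J^{*})=-\mathrm{div}(P)\in K$. The target is a \emph{closed rational} $1$-form $\widetilde{\omega}$ with the same contraction $\iota_{\mathcal{X}_{P}}\widetilde{\omega}=-\mathrm{div}(P)$, for then the classical decomposition of closed rational $1$-forms on $\mathbb{C}^{n}$,
\[
\widetilde{\omega}=d(g/h)+\sum_{i}\lambda_{i}\,\frac{dF_{i}}{F_{i}},\qquad g,h,F_{i}\in\mathbb{C}[x],\ \lambda_{i}\in\mathbb{C},
\]
produces at once the Darboux Jacobian multiplier $\widetilde{J}=\exp(g/h)\prod_{i}F_{i}^{\lambda_{i}}$.

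To effect the passage $\omega^{*}\mapsto\widetilde{\omega}$ I would mirror Singer's planar argument in the multi-derivation setting. Embed $J^{*}$ in a Liouvillian tower $K=K_{0}\subset\cdots\subset K_{r}\ni J^{*}$ and induct on $r$: an algebraic step is neutralized by averaging over Galois conjugates, while an exponential or antiderivative step contributes a term of logarithmic-derivative or exact type. At the inductive core one considers the Picard--Vessiot closure of the subextension generated by $\log J^{*}$; its differential Galois group is an abelian subgroup of $\mathbb{G}_{a}\times\mathbb{G}_{m}$, and the defining relation $\iota_{\mathcal{X}_{P}}d\log J^{*}\in K$ is Galois-invariant, so each Galois conjugate of $\log J^{*}$ differs from it by a first integral. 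Averaging the Galois orbit yields a primitive whose differential is closed and rational and still contracts with $\mathcal{X}_{P}$ to $-\mathrm{div}(P)$. The main obstacle I anticipate is executing this descent \emph{simultaneously} for the full system $\Delta=\{\partial_{1},\ldots,\partial_{n}\}$ of commuting derivations rather than for the single derivation Singer treats, and, crucially, verifying that the invariant $1$-form extracted at each step remains closed so that the final $\widetilde{J}$ is well defined as a Darboux function; once this is in hand, the rational-$1$-form decomposition above closes the proof.
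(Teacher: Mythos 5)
Your overall architecture coincides with the paper's: (i) build a Liouvillian Jacobian multiplier $J^{*}$ from the $n-1$ first integrals, (ii) descend $d\log J^{*}$ through the Liouvillian tower to a closed rational $1$-form with the same contraction against $\mathcal X_P$, and (iii) convert that closed rational form into a Darboux function. Step (i) is correct, and your formulation via $\iota_V(dx_1\wedge\cdots\wedge dx_n)=dH_1\wedge\cdots\wedge dH_{n-1}$ and Cartan's formula is a clean repackaging of the determinant identity $\partial_n\Gamma-\partial_1\Gamma_1-\cdots-\partial_{n-1}\Gamma_{n-1}=0$ that the paper verifies by hand. Step (iii) is essentially the content of the paper's Lemma \ref{l2.2.2}; you cite the decomposition of closed rational $1$-forms as classical, whereas the paper proves it from scratch by partial fractions over a normal closure, Galois averaging, and induction on the number of variables. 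That is a defensible citation, though it is a nontrivial ingredient.

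The genuine gap is step (ii), which is the heart of the theorem and which you explicitly defer (``the main obstacle I anticipate\dots''). What must be shown is that the $n$-tuple $A=(A_1,\ldots,A_n)$ with $A_i=\partial_i\log J^{*}$, which satisfies $\partial_iA_j=\partial_jA_i$ and $\sum_i A_iP_i=\operatorname{div}P$, can be replaced by an $n$-tuple in $\mathbb C(x)$ satisfying the same two identities. The Picard--Vessiot sketch you offer does not accomplish this: averaging over a ``Galois orbit'' is meaningless when the relevant group is an infinite subgroup of $\mathbb G_a\times\mathbb G_m$, the claim that each conjugate of $\log J^{*}$ differs from it by a first integral is unsubstantiated, and a general Liouvillian tower is not a Picard--Vessiot extension, so the Galois correspondence you invoke is not available. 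The paper's Lemma \ref{lth21} instead descends one level of the tower at a time by elementary means: at a finite algebraic step one averages $A$ over the (finite) Galois group of the normal closure fixing $K_l$, which preserves both identities because $P$ and $\operatorname{div}P$ lie in $\mathbb C(x)\subset K_l$; at a step $K_{l+1}=K_l(t)$ with $\partial_it/t\in K_l$ one expands each $A_j$ as a Laurent series in $t$ and keeps the $t^{0}$ coefficient; at a step with $\partial_it\in K_l$ one expands in $t^{-1}$ --- after checking, from $A_i=\partial_ih/h$, that the numerator degree in $t$ does not exceed the denominator degree, so that no positive powers of $t$ occur --- and again keeps the constant term. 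In each case one verifies directly that both the closedness and the contraction identity survive the truncation; this simultaneous preservation for all $n$ derivations is exactly the point you flag as unresolved, and without it the proof is incomplete.
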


In the rest of this paper we will prove our main results.

\medskip
\section{Proof of the main results}\label{s2}

\setcounter{section}{2}
\setcounter{equation}{0}\setcounter{theorem}{0}

\subsection{Proof of Theorem \ref{th1}}
Let $J_1(x),\,\ldots,\, J_{n-1}(x)$ be $n-1$ functionally independent Darboux Jacobian multipliers of system \eqref{2.1.1}. Then we have
\begin{equation}\label{e1}
\mathcal X_P(J_l)=-J_l\,\mbox{div} P,\qquad l=1,\ldots, n-1,
\end{equation}
where $\mbox{div} P=\partial_1 P_1(x)+\ldots+\partial_n P_n(x)$ is the divergence of the vector fields $P(x)$. Recall that $\partial_i P_i$ denotes the partial derivative of the function $P_i$ with respect to $x_i$.

From the definition of Darboux functions and some direct calculations we get that
\[
\frac{\partial_i J_l}{J_l}\in \mathbb C(x),\qquad l\in\{ 1,\ldots, n-1\},\,\,\, i\in\{1,\ldots,n\}.
\]
Recall that $\mathbb C(x)$ is the field of rational functions in $x$. So it follows from the condition $(b)$ of Liouvillian extension of field that $J_l$ for $l=1,\ldots, n-1$ are Liouvillian functions. Furthermore, some easy calculations show that
\[
\frac{J_l}{J_k}, \qquad \mbox{for } \quad 1\le l\ne k\le n-1,
\]
are non--trivial Liouvillian first integrals of the vector field $\mathcal X_P$, i.e. $\frac{J_l}{J_k}$ is not a constant and $\mathcal X_P\left(\frac{J_l}{J_k}\right)\equiv 0$.

We claim that
\[
\frac{J_1}{J_{n-1}},\, \ldots, \, \frac{J_{n-2}}{J_{n-1}},
\]
are functionally independent. Indeed, assume that
\[
c_1\partial\left(\frac{J_1}{J_{n-1}}\right)+\ldots+c_{n-2}\partial\left(\frac{J_{n-2}}{J_{n-1}}\right)=0.
\]
Since
\[
\partial\left(\frac{J_{l}}{J_{n-1}}\right)=\frac{J_{n-1}\, \partial J_l-J_l\partial J_{n-1}}{J_{n-1}^2},
\]
we have
\begin{eqnarray*}
&& c_1J_{n-1}\partial J_1+\ldots+c_{n-2}J_{n-1}\partial J_{n-2}\\
&&\qquad \qquad \,\,-(c_1J_1+\ldots+c_{n-2}J_{n-2})\partial J_{n-1}=0.
\end{eqnarray*}
So by the functional independence of $J_1,\ldots, J_{n-1}$ we must have
\[
c_1J_{n-1}=\ldots=c_{n-2}J_{n-1}=c_1J_1+\ldots+c_{n-2}J_{n-2}=0,
\]
in a full Lebesgure measure subset of $\mathbb C^n$.
Consequently
\[
c_1=\ldots=c_{n-2}=0,
\]
in a full Lebesgure measure subset of $\mathbb C^n$. This proves the claim.

Using the last claim, we assume without loss of generality that
\begin{eqnarray*}
y_i&=& \frac{J_i}{J_{n-1}},\qquad i=1,\ldots,n-2,\\
y_{n-1}&=& x_{n-1},\\
y_n&=& x_n,
\end{eqnarray*}
are invertible, at least in some full Lebesgue measure subset $\Omega$ of $\mathbb C^n$. Denote by $y=G(x)$ this last transformation. Then under it
 the differential system \eqref{2.1.1} is equivalent to
\begin{eqnarray}\label{1.2.5}
\dot y_i&=&0, \quad\qquad i=1,\ldots,n-2,\nonumber\\
\dot y_{n-1}&=& P_{n-1}\circ G^{-1}(y),\\
\dot y_n&=& P_n\circ G^{-1}(y).\nonumber
\end{eqnarray}
Clearly system \eqref{1.2.5} has the first integrals $I_i(y)=y_i$, $i=1,\ldots, n-2$. In addition, we can prove that system \eqref{1.2.5} has the Jacobian multiplier
\[
M(y)=J_{n-1}\circ G^{-1}(y)D_yG^{-1}(y),
\]
where $D_yG^{-1}(y)$ denotes the Jacobian matrix of $G^{-1}$ with respect to $y$.
This shows that the two dimensional differential system
\begin{eqnarray*}
\dot y_{n-1}&=& P_{n-1}\circ G^{-1}(I_1,\ldots, I_{n-2},y_{n-1},y_n)=: g_{n-1}(y_{n-1},y_n),\\
\dot y_n&=& P_n\circ G^{-1}(I_1,\ldots, I_{n-2},y_{n-1},y_n)=: g_{n}(y_{n-1},y_n),
\end{eqnarray*}
has the integrating factor
\[
V(y_{n-1},y_n)=J_{n-1}\circ G^{-1}(y)D_yG^{-1}(I_1,\ldots,I_{n-2},y_{n-1},y_n),
\]
where we take $I_1,\ldots,I_{n-2}$ as constants. Hence this last two dimensional differential system has the first integral.
\[
I_{n-1}(y_{n-1},y_n)=\int V g_{n}dy_{n-1}-Vg_{n-1}dy_n.
\]
Obviously, $I_{n-1}$ is functionally independent of $I_1,\dots,I_{n-2}$, because the latter are independent of $y_{n-1}$ and $y_n$.

Next we prove that $I_{n-1}$ is a Liouvillian function. First we prove that $G^{-1}(x)$ is a Liouvillian function. Indeed, by
\[
G^{-1}\circ G(x)=x,
\]
we have
\[
D_y G^{-1}(G(x)) D_x G(x)=E.
\]
where $E$ is the $n$--dimensional unit matrix. Since $G$ is Liouvillian, and so is $(D_x G(x))^{-1}$. Hence we have
\[
D_y G^{-1}(G(x)) =(D_x G(x))^{-1},
\]
is a Liouvillian function. This shows that $G^{-1}(y)$ is a Liouvillian function. Furthermore it follows from the above construction that $g_{n-1},g_n$ and $V$ are Liouvillian functions. This proves that $I_{n-1}$ is a Liouvillian function.

Applying the transformation $y=G(x)$ to $I_1(y),\ldots,I_{n-1}(y)$, we get $n-1$ functionally independent Liouvillian first integrals
\[
H_1(x):=I_1\circ G(x),\quad \ldots, \quad H_{n-1}(x)=I_{n-1}\circ G(x),
\]
of differential system \eqref{2.1.1}. We complete the proof of the theorem.

\subsection{Proof of Theorem \ref{th2}}

For proving Theorem \ref{th2} and readers's convenience, we recall some notions.

Given a field $K$,
\begin{itemize}
\item A \textit{ separating field} of a polynomial $p(x)$ over  $K$ is a minimal field extension of $K$ such that $p(x)$ can be decomposed into product of  linear factors over this field extension, i.e.  $p(x)=\prod (x-a_i)$, $a_i\in L$, $L/K$ is the minimal field extension such that this decomposition can happen.
\item We say that an algebraic field extension $L/K$ of $K$ is \textit{normal}, if $L$ is a separating field of polynomials in $K[x]$.
\item The \textit{normal closure} of an algebraic field extension $L/K$ is a field extension $\overline L$ of $L$ such that $\overline L/K$ is normal, and  $\overline L$ is the minimal field extension satisfying this property.
\item \textit{Field automorphism} over field $K$ is a bijective map $\varphi:\,\, K\rightarrow K$ which keeps the algebraic properties of $K$, i.e. $\varphi$ satisfies that $\varphi(0_K)=0_K$, $\varphi(1_K)=1_K$, $\varphi(a+b)=\varphi(a)+\varphi(b)$ and $\varphi(ab)=\varphi(a)\varphi(b)$.
\item The set of all field automorphisms over field  $K$ fixing elements of a subfield $K'\subset K$ forms a group under the composition of maps. This group   is called \textit{Galois group}.
\item The \textit{order of a group} is the number of elements of a group $G$, denoted by~$|G|$.
\end{itemize}

Now we can prove Theorem \ref{th2}. It will follows from the following lemmas.

\begin{lemma}\label{lth21}
If system \eqref{2.1.1} is Liouvillian integrable, then it has a Jacobian multiplier of the form
\[
J=\exp\left(\int U_1dx_1+\ldots+U_ndx_n\right),
\]
with $U_i\in \mathbb C(x)$, $i=1,\ldots,n$, and
\[
\partial_iU_j=\partial_jU_i,\qquad 1\le j<i\le n.
\]
\end{lemma}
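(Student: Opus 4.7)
The plan is to adapt Singer's two-dimensional strategy to dimension $n$, splitting the argument into a geometric step that manufactures a Liouvillian Jacobian multiplier from the given Liouvillian first integrals, and an algebraic step that descends its logarithmic derivatives down a Liouvillian tower into $\mathbb C(x)$.

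First I would exploit the $n-1$ functionally independent Liouvillian first integrals $H_1,\ldots,H_{n-1}$ provided by the hypothesis. They all lie in some Liouvillian differential field extension $L/\mathbb C(x)$. Since the $(n-1)\times n$ Jacobian matrix $(\partial_k H_i)$ has rank $n-1$ on an open set, its signed maximal minors
\[
M_j:=(-1)^{j+1}\det\bigl(\partial_k H_i\bigr)_{1\le i\le n-1,\;k\ne j},\qquad j=1,\ldots,n,
\]
span the kernel of this matrix, which coincides with the line spanned by $P$. Hence there is a single $J_0\in L$ satisfying $M_j=J_0\,P_j$ for every $j$, and expanding $\mbox{div}(J_0P)$ together with $\mathcal X_P(H_i)=0$ shows that $J_0$ is a Jacobian multiplier. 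In particular each $\partial_i J_0/J_0\in L$.

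Next I would embed $L$ into a Liouvillian tower $\mathbb C(x)=K_0\subset K_1\subset\cdots\subset K_r$ with $J_0\in K_r$, and run a downward induction on the level $s$ such that some chosen Jacobian multiplier lives in $K_s$, modifying the multiplier at each stage while preserving the Jacobian property. At the top step $K_s/K_{s-1}$ the argument splits according to type. If it is algebraic, I would pass to the normal closure and replace the current multiplier $J$ by $\widetilde J=\prod_{\sigma\in G}\sigma(J)$, where $G$ is the Galois group of $K_s/K_{s-1}$; each $\sigma$ commutes with the $\partial_i$ (which restrict to $K_{s-1}$), so every $\sigma(J)$ is again a Jacobian multiplier, their product lies in $K_{s-1}$, and $\partial_i\widetilde J/\widetilde J=\sum_\sigma \partial_i\sigma(J)/\sigma(J)\in K_{s-1}$. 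If the extension is of exponential type $K_s=K_{s-1}(t)$ with $\partial_i t/t\in K_{s-1}$, I would expand $J$ as a Laurent polynomial in $t$ and use the constraint that every $\partial_i J/J$ have a controlled form to force $J=\alpha t^k$ with $\alpha\in K_{s-1}$, giving $\partial_i J/J=\partial_i\alpha/\alpha+k\,\partial_i t/t\in K_{s-1}$. If the extension is of integral type $K_s=K_{s-1}(t)$ with $\partial_i t\in K_{s-1}$, a Rosenlicht-type structure theorem shows that the logarithmic derivatives of $J$ cannot genuinely depend on $t$, so they already lie in $K_{s-1}$.

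Iterating these reductions brings each $\partial_i J/J$ all the way down to $\mathbb C(x)$, producing the rational $U_i$; the closedness $\partial_i U_j=\partial_j U_i$ then follows automatically from the equality of mixed partial derivatives applied to any local branch of $\log J$. The main obstacle, as in Singer's planar proof, is the integral step: in our $n$-dimensional setting the Rosenlicht-type analysis must be carried out coherently with respect to all $n$ derivations $\partial_1,\ldots,\partial_n$ simultaneously, and the reductions at successive tower levels have to stay compatible across every partial derivation. Managing this multi-derivation bookkeeping is the technical heart of the lemma.
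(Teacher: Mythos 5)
Your overall architecture matches the paper's: a geometric step producing a Liouvillian multiplier from the minors of the Jacobian matrix of $(H_1,\ldots,H_{n-1})$ (your $M_j$ are, up to sign, the paper's $\Gamma$ and $\Gamma_i$, and the divergence-free identity $\sum_j\partial_jM_j=0$ is exactly the determinant identity the paper proves), followed by a case-by-case descent through the Liouvillian tower. However, the descent as you describe it has genuine gaps. First, in the algebraic case $\widetilde J=\prod_{\sigma\in G}\sigma(J)$ is \emph{not} a Jacobian multiplier: since each $\sigma(J)$ satisfies $\mathcal X_P(\sigma(J))=-\sigma(J)\,\mathrm{div}P$, one gets $\mathcal X_P(\widetilde J)=-N\,\widetilde J\,\mathrm{div}P$ with $N=|G|$, so the induction breaks unless you take the $N$-th root, i.e.\ replace the logarithmic derivatives by their Galois averages $\frac1N\sum_\sigma\partial_i\sigma(J)/\sigma(J)$, which is precisely what the paper does. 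Second, and more structurally, the object that actually lives in the tower is not $J$ but the $n$-tuple of logarithmic derivatives $A_i=\partial_iJ/J$: after the first reduction the new ``multiplier'' is only defined as $\exp\left(\int\sum B_i\,dx_i\right)$ and is in general not an element of any $K_s$, so the instruction ``expand $J$ as a Laurent polynomial in $t$'' in the transcendental cases is not well-posed. The paper avoids this by descending the closed $1$-form $(A_1,\ldots,A_n)$ subject to the two invariants $\partial_iA_j=\partial_jA_i$ and $\sum_iA_iP_i=\mathrm{div}P$, and in cases $(b)$ and $(c)$ simply extracting the coefficient of $t^0$ in the Laurent expansion in $t$ (resp.\ in $1/t$), which preserves both invariants because $\partial_i$ respects (resp.\ shifts by one) the grading in $t$.

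Third, you defer the integral case to an unnamed ``Rosenlicht-type structure theorem,'' which is exactly the step you yourself identify as the technical heart; the paper's argument there is elementary and concrete: since each $A_i$ is a logarithmic derivative, its numerator degree in $t$ is at most its denominator degree, so its Laurent expansion in $1/t$ has no positive powers of $t$, and because $\partial_i t\in K_{s-1}$ lowers the $t$-degree by one, the constant terms $a_0^{(i)}$ again satisfy both invariants. Finally, the closedness $\partial_iU_j=\partial_jU_i$ does not ``follow automatically from mixed partials of $\log J$'' at the end — it is needed \emph{before} that to make $J=\exp\left(\int\sum U_i\,dx_i\right)$ well defined, so it must be carried as an invariant through every reduction step, as the paper does.
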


\begin{proof}
Assume that system \eqref{2.1.1} has the functionally independent Liouvillian first integrals $H_1,\ldots,H_{n-1}$, which are defined in a full Lebesgue measure subset $\Omega$ of $\mathbb C^n$.
By definition of first integrals we have
\begin{equation}\label{e3}
\mathcal X_P(H_i)(x)\equiv 0, \quad  x\in\Omega, \qquad i=1,\ldots,n-1.
\end{equation}

From independence of $H_1,\ldots, H_{n-1}$ we can assume without loss of generality that
\[
\Gamma:=\det\left(\partial_1\mathcal H,         \cdots ,  \partial_{n-1}\mathcal H\right)\ne 0,\qquad x\in \Omega,
\]
with
\[
\mathcal H:=(H_1,\ldots,H_{n-1})^T,
\]
where $T$ denotes the transpose of a matrix, and
\[
\partial_i\mathcal H:=(\partial_iH_1,\ldots,\partial_iH_{n-1})^T,\qquad i=1,\ldots,n.
\]
Set for $i=1,\ldots,n-1$
\[
\Gamma_i:=\det\left(
\partial_1\mathcal H,   \cdots,    \partial_{i-1}\mathcal H ,     \partial_n \mathcal H,  \partial_{i+1}\mathcal H ,    \cdots, \partial_{n-1}\mathcal H\right).
\]
Clearly $\Gamma$ and $\Gamma_i$, $i=1,\ldots,n$, are Liouvillian functions.

By the Cramer's rule we get from \eqref{e3} that
\[
P_i(x)=-\frac{\Gamma_i}{\Gamma} P_n(x), \quad i=1,\ldots,n-1.
\]
Hence we have
\[
\Gamma (P_1(x),\ldots,P_{n-1})=-(\Gamma_1,\ldots, \Gamma_{n-1})P_n(x).
\]
Since $P_1,\ldots,P_{n-1},P_n$ are relative coprime, and $\Gamma$ and $\Gamma_i$ are Liouvillian functions, so there exists a Liouvillian function $h(x)$ such that
\[
h(x)\Gamma=P_n(x).
\]
Consequently we have
\[
P_i(x)=-h(x)\Gamma_i,\quad i=1,\ldots,n-1.
\]
Set
\begin{equation}\label{Ai1}
A_i:=\frac{\partial_i h}{h}, \quad i=1,\ldots,n;  \qquad A:=(A_1,\ldots, A_n).
\end{equation}
Then $A_i$ is Liouvillian for each $i\in\{1,\ldots,n\}$.

We claim that
\begin{eqnarray}
\partial_iA_j&=&\partial_jA_i,\qquad 1\le j<i\le n, \label{e4}\\
\langle A,P\rangle&:=& A_1P_1+\ldots +A_nP_n=\mbox{div} P. \label{e5}
\end{eqnarray}
The equality \eqref{e4} can be proved by direct calculations via the fact
\[
\partial_j\partial_ih=\partial_i\partial_j h\qquad \mbox{ for all } \,\,\, i,j\in\{i,\ldots,n\}.
\]
For proving \eqref{e5}, some computations show that
\begin{eqnarray*}
\mbox{div}P&=&\partial_1(-h\Gamma_1)+\ldots+\partial_{n-1}(-h\Gamma_{n-1})+\partial_n(h\Gamma)\\
&=& A_1P_1+\ldots +A_nP_n+h(\partial_n\Gamma-\partial_1\Gamma_1-\ldots-\partial_{n-1}\Gamma_{n-1}).
\end{eqnarray*}
Next we only need to prove that
\[
\partial_n\Gamma-\partial_1\Gamma_1-\ldots-\partial_{n-1}\Gamma_{n-1}=0.
\]
Since
\begin{eqnarray*}
\partial_n\Gamma&=&\sum\limits_{i=1}\limits^{n-1}\det\left(\partial_1 \mathcal H, \ldots, \partial_{i-1} \mathcal H, \partial_n\partial_i \mathcal H, \partial_{i+1} \mathcal H, \ldots, \partial_{n-1} \mathcal H\right),\\
\partial_i\Gamma_i&=&\det \left(\partial_1 \mathcal H, \ldots, \partial_{i-1} \mathcal H, \partial_i\partial_n \mathcal H, \partial_{i+1} \mathcal H, \ldots, \partial_{n-1} \mathcal H\right)\\
&&+\sum\limits_{j=1,j\ne i}\limits^{n-1}\det\left(\partial_1 \mathcal H, \ldots, \partial_{j-1} \mathcal H, \partial_i\partial_j \mathcal H, \partial_{j+1} \mathcal H, \ldots, \partial_{i-1} \mathcal H, \right.\\
&& \qquad\qquad\qquad\qquad\qquad\left. \partial_n \mathcal H, \partial_{i+1}\mathcal H,\ldots,\partial_{n-1}\mathcal H\right),
\end{eqnarray*}
we have
\begin{eqnarray*}
&&\partial_1\Gamma_1+\ldots+\partial_{n-1}\Gamma_{n-1}-\partial_n\Gamma\\
&=& \sum\limits_{i=1}\limits^{n-1}\sum\limits_{j=1,j\ne i}\limits^{n-1}\det\left(\partial_1 \mathcal H, \ldots, \partial_{j-1} \mathcal H, \partial_i\partial_j \mathcal H, \partial_{j+1} \mathcal H, \ldots, \partial_{i-1} \mathcal H, \right.\\
&& \qquad\qquad\qquad\qquad\qquad\quad\left. \partial_n \mathcal H, \partial_{i+1}\mathcal H,\ldots,\partial_{n-1}\mathcal H\right)\\
&=&0,
\end{eqnarray*}
where in the last equality we have used the fact that
\begin{eqnarray*}
&&\det\left(\partial_1 \mathcal H, \ldots, \partial_{j-1} \mathcal H, \partial_i\partial_j \mathcal H, \partial_{j+1} \mathcal H, \ldots, \partial_{i-1} \mathcal H,\right. \\
&& \qquad\qquad\qquad\qquad\qquad\quad \quad \left. \partial_n \mathcal H, \partial_{i+1}\mathcal H,\ldots,\partial_{n-1}\mathcal H\right)\\
&&\qquad +\det\left(\partial_1 \mathcal H, \ldots, \partial_{j-1} \mathcal H, \partial_n \mathcal H, \partial_{j+1} \mathcal H, \ldots, \partial_{i-1} \mathcal H, \right. \\
&& \qquad\qquad\qquad\qquad\qquad\quad \quad \left.\partial_j\partial_i \mathcal H, \partial_{i+1}\mathcal H,\ldots,\partial_{n-1}\mathcal H\right)=0.
\end{eqnarray*}
This proves the claim.

Set
\[
U_i=-A_i,\qquad i=1,\ldots,n,
\]
with $A_i$ defined in \eqref{Ai1}. By \eqref{e4} and the Stokes's theorem (see e.g. \cite[p.3]{BT82}), we get that
\begin{equation}\label{JaJ1}
J=\exp\left(\int U_1dx_1+\ldots+U_ndx_n\right),
\end{equation}
is well defined in any connected subsets where $U_1,\ldots,U_n$ are defined. Furthermore we can check that $J$ in \eqref{JaJ1}
is a Jacobian multiplier of system \eqref{2.1.1} if and only if \eqref{e5} holds. So, in what follows we only need to prove that there exist rational functions $B_i$ for $i=1,\ldots,n$ instead of  $A_i$ such that the equalities \eqref{e4} and \eqref{e5} hold.

According to the Liouvillian extension of field in the tower form, all the $A_i$ belong to some tower for $i=1,\ldots,n$. We distinguish three different
cases according to the definition of the tower.

\noindent{$(a)$} $A_i\in K_{l+1}$, $i=1,\ldots, n$, and $K_{l+1}$ is an algebraic extension of the field $K_l$. We will prove that there exist $B_i\in K_l$ instead of $A_i$ for $i=1,\ldots,n$ such that \eqref{e4} and \eqref{e5} hold.

Let $\overline K_{l+1}$ be the normal closure of $K_{l+1}$, and $\mathcal G$ be the Galois group formed by the automorphisms of $\overline K_{l+1}$ fixing $K_l$.
Then it follows from a result of Artin (see Lang \cite[Theorem 1.1]{La2002}) that $\mathcal G$ is of finite order, and denote by $N=|\mathcal G|$ the order of the group. Note that $N\le [K_{l+1}:K_l]$, the degree of the algebraic extension of the field.

Since $P\in \mathbb C(x)^n$ and $\mathbb C(x)\subset K_l$, we get from \eqref{e4} and \eqref{e5} that
\begin{equation}\label{e6}
\displaystyle\begin{array}{rcl}
\partial_ig(A_j)&=&\partial_jg(A_i),\qquad \forall g\,\in\mathcal G,\\
\left\langle \sum\limits_{g\in \mathcal G}g(A),P\right\rangle&=&\sum\limits_{g\in \mathcal G}g(A_1)P_1+\ldots+\sum\limits_{g\in \mathcal G}g(A_n)P_n = N\mbox{div}P,
\end{array}
\end{equation}
where in the second equality we have used the fact that $g\in\mathcal G$ fixes $K_l$. Set
\[
B_i=\frac 1N\sum\limits_{g\in \mathcal G}g(A_i),\qquad i=1,\ldots,n.
\]
Then $B_i\in K_l$ for $i=1,\ldots,n$, because all $B_i$ are fixed under the action of all elements of the Galois group $\mathcal G$. Furthermore, we get from \eqref{e6} that \eqref{e4} and \eqref{e5} hold for $B_i$ instead of $A_i$ for $i=1,\ldots,n$.

\smallskip

\noindent $(b)$ Assume $K_{l+1}=K_l(t)$ with $t$ a transcendental element over $K_{l}$ and $\partial_it/t\in K_l$ for $i=1,\ldots,n$.

Since $A_i\in K_l(t)$ for $i=1,\ldots,n$, we can assume without loss of generality that
\[
A_i=a_i(t)\in K_l(t),\qquad i=1,\ldots,n.
\]
Expanding $a_j(t)$ into Laurent series in $t$ gives
\begin{equation}\label{e7}
a_j(t)=a_0^{(j)}+\sum\limits_{s\in\mathbb Z\setminus\{0\}}a_s^{(j)}t^s,\qquad j=1,\ldots, n,
\end{equation}
with $a_s^{(j)}\in K_l$ for $j=1,\dots,n$ and all $s$. Then we have
\begin{equation}\label{e8}
\partial_iA_j=\partial_ia_0^{(j)}+\sum\limits_{s\in\mathbb Z\setminus\{0\}}\left(\partial_ia_s^{(j)}+sa_s^{(j)} p_i  \right)t^s,
\end{equation}
where $p_i\in K_l$ satisfying $\partial_it/t=p_i\in K_l$ for $i=1,\ldots,n$. Set
\[
B_i=a_0^{(i)}, \qquad i=1,\ldots,n,
\]
we have $B_i\in K_{l}$. Substituting \eqref{e7} and \eqref{e8} into \eqref{e4} and \eqref{e5}, and equating the coefficients of $t^0$, we get that $B_i$ for $i=1,\ldots,n$ satisfy \eqref{e4} and \eqref{e5} instead of $A_i$.

\noindent $(c)$ Assume that $K_{l+1}=K_l(t)$ with $t$ a transcendental element over $K_l$ and $\partial_i t\in K_l$ for all $i\in\{1,\ldots,n\}$.

Similar to $(b)$ we set
\[
A_j=a_j(t)\in K_l(t),\qquad j=1,\ldots,n.
\]
Now the Laurent expansion in $t$ does not work, we choose the Laurent expansion in $1/t$ of $a_j(t)$. Since $A_i\in K_l(t)$,
we get from its construction \eqref{Ai1}, i.e. $A_i=\partial_ih/h$, that the degree of numerator in $t$ of $a_j(t)$ is less than or equal to the degree of its denominator.
Write $a_j(t)$ as
\begin{eqnarray*}
a_j(t)&=&\frac{a_{j0}+a_{j1}t+\ldots+a_{jk}t^k}{b_{j0}+b_{j1}t+\ldots+b_{jl}t^l}\\
& = &
\frac{a_{j0}t^{-l}+a_{j1}t^{-(l-1)}+\ldots+a_{jk}t^{-(l-k)}}{b_{j0}t^{-l}+b_{j1}t^{-(l-1)}+\ldots+b_{jl}}.
\end{eqnarray*}
Since $l\ge k$, so the Laurent expansion in $t^{-1}$ of $a_j(t)$ has the form
\begin{equation}\label{e9}
a_j(t)= \sum\limits_{s=-\infty}\limits^{0}a_s^{(j)}t^s,\qquad j=1,\ldots, n,
\end{equation}
with $a_s^{(j)}\in K_l$ for $j=1,\dots,n$ and all $s$. Direct calculation shows that
\begin{equation}\label{e10}
\partial_iA_j=\sum\limits_{s=-\infty}\limits^{0}\left(\partial_ia_{s-1}^{(j)}+sa_s^{(j)}q_i\right)t^{s-1}+\partial_ia_0^{(j)},
\end{equation}
where $q_i\in K_l$ satisfying $q_i=\partial_it\in K_l$ for $i=1,\ldots,n$.

Set
\[
B_i=a_0^{(i)}, \qquad i=1,\ldots,n.
\]
Then $B_i\in K_{l}$. Using the expansions \eqref{e9} and \eqref{e10}  we get from \eqref{e4} and \eqref{e5} that
\begin{eqnarray*}
\partial_iB_j&=&\partial_jB_i,\qquad 1\le j<i\le n,\\
\mbox{div} P&=&  B_1P_1+\ldots +B_nP_n .
\end{eqnarray*}
Of course if all $B_i=0$, then $\mbox{div}P=0$, and so $J=1$ is a Jacobian multiplier.

Summarizing the cases $(a)$, $(b)$ and $(c)$, and combining the definition of Liouvillian functions in the tower form, by induction we get that there exist
$U_1,\ldots,U_n\in K_0=\mathbb C(x)$ for which \eqref{e4} and \eqref{e5} hold instead of $A_1,\ldots,A_n$. We complete the proof of the lemma.
\end{proof}

The next result shows that the existence of Jacobian multipliers of the form given in Lemma \ref{lth21} implies the existence of Darboux Jacobian multipliers.

\begin{lemma}\label{l2.2.2}
If polynomial differential system \eqref{2.1.1} has a Jacobian multiplier
\[
J=\exp\left(\int U_1dx_1+\ldots+U_ndx_n\right),
\]
with $U_i\in \mathbb C(x)$, and $\partial_jU_i=\partial_iU_j$ for $1\le i,j\le n$, then it has a Darboux Jacobian multiplier
\[
\exp\left(\frac gh\right)\prod\limits_if_i^{l_i},
\]
where $g,h,f_i\in \mathbb C[x,y]$, $l_i\in \mathbb C$.
\end{lemma}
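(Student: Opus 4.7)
The hypothesis $\partial_iU_j=\partial_jU_i$ says precisely that the rational $1$--form
\[
\omega:=U_1\,dx_1+\ldots+U_n\,dx_n
\]
is closed on $\mathbb C^n$. My plan is to establish the decomposition
\[
\omega=dR+\sum_{i=1}^{r}\lambda_i\,\frac{df_i}{f_i},
\]
where $R\in\mathbb C(x)$, the $f_i\in\mathbb C[x]$ are irreducible and pairwise distinct, and $\lambda_i\in\mathbb C$. Once this is in hand, termwise integration and exponentiation give
\[
J=\exp(R)\prod_{i=1}^{r}f_i^{\lambda_i};
\]
writing $R=g/h$ in lowest terms with $g,h\in\mathbb C[x]$ coprime produces exactly a Darboux Jacobian multiplier of the claimed form.

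The decomposition is produced in two stages. In the first stage I reduce $\omega$ to have only simple poles along its polar divisor. Let $f\in\mathbb C[x]$ be an irreducible polynomial along which $\omega$ has a pole of order $k\ge 2$. Choosing local coordinates $(f,y_2,\ldots,y_n)$ at a smooth point of $\{f=0\}$ and comparing the $1/f^{k+1}$ term on both sides of $d\omega=0$ forces the leading $dy_j$--coefficients of $f^k\omega$ to vanish on $\{f=0\}$; hence modulo poles of strictly smaller order along $\{f=0\}$ the leading part of $\omega$ has the form $A\,df/f^k$, and the elementary identity
\[
\frac{A\,df}{f^k}=-d\!\left(\frac{A}{(k-1)f^{k-1}}\right)+\frac{dA}{(k-1)f^{k-1}}
\]
expresses it as an exact piece plus a form of pole order at most $k-1$ along $\{f=0\}$. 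Iterating this step across all irreducible components of the polar divisor of $\omega$ and all pole orders $\ge 2$ yields $R_0\in\mathbb C(x)$ such that $\omega_1:=\omega-dR_0$ is a closed rational $1$--form having at most simple poles along each irreducible component of its polar locus.

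In the second stage, for each irreducible $f_i$ in the polar divisor of $\omega_1$ I compute the Poincar\'e residue of $\omega_1$ along $\{f_i=0\}$. Closedness of $\omega_1$ forces this residue to be a closed $0$--form on the smooth locus of $\{f_i=0\}$, hence locally constant; irreducibility of $f_i$ makes that smooth locus connected, so the residue is a single constant $\lambda_i\in\mathbb C$. Subtracting $\sum_i\lambda_i\,df_i/f_i$ then leaves a closed rational $1$--form on $\mathbb C^n$ with no poles at all, which by the Poincar\'e lemma equals $dS$ for some $S\in\mathbb C[x]$; setting $R:=R_0+S$ completes the decomposition. The main obstacle is the first stage: the pole-reduction identity is local in nature, and care is needed to check that the local choices of the lift $A$ of the residue can be assembled into a single globally defined $R_0\in\mathbb C(x)$, particularly when the polar components are singular or intersect one another. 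The residue step is then standard once one restricts to the connected smooth locus of each irreducible component $\{f_i=0\}$.
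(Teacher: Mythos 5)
Your route is genuinely different from the paper's. You prove the underlying fact --- a closed rational $1$--form $\omega=\sum U_i\,dx_i$ decomposes as $dR+\sum\lambda_i\,df_i/f_i$ --- geometrically: first kill the poles of order $\ge 2$ along each irreducible component of the polar divisor by subtracting exact rational forms, then observe that closedness forces the Poincar\'e residue along each remaining simple-pole component to be a constant $\lambda_i$ (using connectedness of the smooth locus of an irreducible hypersurface), subtract $\sum\lambda_i\,df_i/f_i$, and finish with the Poincar\'e lemma. The paper instead argues algebraically: it expands each $U_k$ in partial fractions with respect to $x_1$ over a splitting field $K$ of the numerators and denominators over $\mathbb C(x_2,\ldots,x_n)$, uses $\partial_1U_l=\partial_lU_1$ to show the first-order coefficients $\alpha_{i1}^{(1)}$ are complex constants (this is exactly your residue-constancy step), builds an explicit primitive $\Phi$, averages over the Galois group of $K/\mathbb C(x_2,\ldots,x_n)$ to push the data back down to rational functions, and then inducts on the number of variables to handle the leftover form $\sum_{j\ge 2}S_j\,dx_j$ in one fewer variable. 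Your approach is shorter to state and more conceptual; the paper's buys an entirely algebraic, coordinate-by-coordinate argument in which the ``globalization'' is done automatically by the Galois averaging.

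That said, as written your proof has a real gap precisely where you flag it. The pole-reduction identity is derived in local analytic coordinates $(f,y_2,\ldots,y_n)$ near a smooth point of $\{f=0\}$, and you never produce the global rational function $A\in\mathbb C(x)$ whose existence the subtraction requires; nor do you rule out that subtracting $d\bigl(A/((k-1)f^{k-1})\bigr)$ introduces new polar components through the denominator of $A$, which would spoil the induction on pole order. Handling singular components and intersections of polar components is not a routine remark: it is the entire technical content of the lemma, and it is exactly what the paper's partial-fraction expansion over a splitting field plus Galois averaging is engineered to do (there the ``lift of the residue'' is the explicit coefficient $\alpha_{i,j+1}^{(1)}$, manifestly an element of $K$, and the averaging $\frac1N\sum_{\sigma\in\mathcal G}\sigma(\cdot)$ descends everything to $\mathbb C(x)$). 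To complete your argument you would either need to carry out the global polar-part bookkeeping algebraically (which essentially reproduces the paper's computation) or invoke the known structure theorem for closed rational $1$--forms on $\mathbb C^n$ with a citation, at which point the lemma follows immediately from your final two paragraphs.
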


\begin{proof}
Since $U_1,\ldots,U_n\in \mathbb C(x)$, we treat their numerators and denominators as polynomials in $x_1$ with coefficients in $\mathbb C[x_2,\ldots,x_n]$. Let $K$ be the minimal normal algebraic field extension of $\mathbb C(x_2,\ldots,x_n)$ such that it is the separating field of the numerators and denominators of $U_1,\ldots,U_n$.

By the properties on normal algebraic field extension, the rational functions  $U_1,\ldots, U_n$ over $K$ can be expanded in
\begin{equation}\label{zx627}
U_k(x)= \sum\limits_{i=1}\limits^r\sum\limits_{j=1}\limits^{m}\frac{\alpha_{ij}^{(k)}}{(x_1-\beta_i)^j}
        +\sum\limits_{i=0}\limits^{p}\xi_i^{(k)}x_1^i,\qquad k=1,\ldots,n,
\end{equation}
where $\alpha_{ij}^{(k)}, \beta_i, \xi_i^{(k)}\in K$, and parts of them can be zero. Direct calculations show that  for $l\in\{2,\ldots,n\}$
\begin{eqnarray*}
\partial_lU_1 &=&\sum\limits_{i=1}\limits^r\sum\limits_{j=1}\limits^{m}\left(\frac{\partial_l\alpha_{ij}^{(1)}}{(x_1-\beta_i)^j}
              +\frac{j\alpha_{ij}^{(1)}\partial_l\beta_i}{(x_1-\beta_i)^{j+1}}\right) +\sum\limits_{i=0}\limits^{p}\partial_l\xi_i^{(1)}x^i,\\
\partial_1U_l &=& \sum\limits_{i=1}\limits^r\sum\limits_{j=1}\limits^{m}\frac{-j\alpha_{ij}^{(l)}}{(x_1-\beta_i)^{j+1}}
             +\sum\limits_{i=0}\limits^{p}i\xi_{i}^{(l)}x_1^{i-1}.
\end{eqnarray*}
Using the assumption $\partial_1U_l=\partial_lU_1$, and comparing the coefficients of $(x_1-\beta_{i})^{-j}$ and $x^i$, we get that  for $l\in\{2,\ldots,n\}$
\begin{equation}\label{2.2.2}
\partial_l\alpha_{i,j+1}^{(1)}+j\alpha_{ij}^{(1)}\partial_l\beta_i+j\alpha_{ij}^{(l)}=0,\qquad \partial_l\xi_i^{(1)}=(i+1)\xi_{i+1}^{(l)}.
\end{equation}
The first equality with $j=0$ of \eqref{2.2.2} shows that $\alpha_{i1}^{(1)}\in \mathbb C$, because $\alpha_{ij}^{(k)}$ are functions in $x_2,\ldots,x_n$ as prescribed.

Set
\begin{eqnarray*}
\Phi(x)&=&\sum\limits_{i=1}\limits^r\alpha_{i1}^{(1)}\log (x_1-\beta_i)+\sum\limits_{i=1}\limits^r\sum\limits_{j=2}\limits^{m}\frac{-1}{j-1}\frac{\alpha_{ij}^{(1)}}{(x_1-\beta_i)^{j-1}}\\
&&          +\sum\limits_{i=1}\limits^p\frac{\xi_i^{(1)}}{i+1}x_1^{i+1}+\int \xi_0^{(2)} dx_2+\ldots+\xi_0^{(n)} dx_n,
\end{eqnarray*}
where the integration represents any primitive function of $\xi_0^{(2)} dx_2+\ldots+\xi_0^{(n)} dx_n$. Direct calculations show that $\partial_1\Phi=U_1$.
For $l>1$ since
\begin{eqnarray*}
\partial_l\Phi(x)&=&-\sum\limits_{i}\frac{\alpha_{i1}^{(1)}\partial_l\beta_i}{x_1-\beta_i}
 +\sum\limits_{i,j}\frac{-1}{j-1}\left(\frac{\partial_l\alpha_{ij}^{(1)}}{(x_1-\beta_i)^{j-1}}+\frac{(j-1)\alpha_{ij}^{(1)}\partial_l\beta_i}{(x_1-\beta_i)^j}\right)
          \\
          && +\sum\limits_{i}\frac{\partial_l\xi_i^{(1)}}{i+1}x^{i+1}+\xi_0^{(l)}.
\end{eqnarray*}
Using the equalities \eqref{2.2.2} and by some calculations, we get that
\[
\partial_l\Phi=U_l, \qquad l=2,\ldots,n.
\]
These show that
\[
\Phi(x)=\int U_1dx_1+\ldots+U_ndx_n,
\]
with possible a constant difference.

Denote by $\mathcal G$ the group of automorphisms over $K$ which keep $\mathbb C(y)$, where $y=(x_2,\ldots,x_n)$. Since $K$ is the minimal normal algebraic field extension of $\mathbb C(y)$, we get from the properties of field extensions that $\mathcal G$ is a finite group. Denote by $N=|\mathcal G|$, the order of $\mathcal G$.
Set
\[
\Psi=\frac 1N\sum\limits_{\sigma\in\mathcal G}\sigma(\Phi).
\]
Since $\sigma\in\mathcal G$ is an automorphism over the algebraic field extension $K$ of $\mathbb C(y)$, it follows that
\begin{eqnarray*}
\sigma\left(\alpha_{i1}\log (x_1-\beta_i)\right)&=&\alpha_{i1}\log (x_1-\sigma(\beta_i)), \\
 \sigma\left(\int \gamma_0^{(2)} dx_2+\ldots+\gamma_0^{(n)} dx_n\right)&=& \int\sigma(\gamma_0^{(2)})dx_2+\ldots+\sigma(\gamma_0^{(n)})dx_n,
\end{eqnarray*}
where the second equality may have a constant difference.

Since  $\sigma(\partial_l\Phi)=\sigma(U_l)$, we have
\begin{equation}\label{xxx1}
  \partial_l\sigma(\Phi)=\sigma(U_l)=U_l,\qquad l=1,\ldots,n,
\end{equation}
where in the second equality we have used the facts that the numerators and denominators of $U_l$'$s$ can be written in the polynomials of $x_1$ with
coefficients in $\mathbb C(y)$ and $\sigma$ keeps $\mathbb C(y)$.

The  equalities \eqref{xxx1} show that
\[
\partial_l\Psi=U_l,\qquad   l=1,\ldots,n.
\]
Moreover we have
\[
\Psi(x)=\sum\limits_{i=1}\limits^{r_0}c_i\log R_i(x)+R(x)+\int S_2(y)dx_2+\ldots+S_n(y)dx_n,
\]
where $c_i\in\mathbb C$, $R_i,\,R\in\mathbb C(x)$ and $S_i\in\mathbb C(y)$. Recall that $y=(x_2,\ldots,x_n)$.
By the expansions of $U_k$'$s$ in \eqref{zx627} and $\partial_lU_k=\partial_kU_l$, it follows that
\[
\partial_l\xi_0^{(k)}(y)=\partial_k\xi_0^{(l)}(y),\qquad 2\le k,l\le n.
\]
So we have
\[
\sum\limits_{\sigma\in\mathcal G}\sigma(\xi_0^{(k)})\in\mathbb C(y),\qquad k=2,\ldots,n,
\]
and for $2\le k,l\le n$
\[
\partial_l\left(\sum\limits_{\sigma\in\mathcal G}\sigma(\xi_0^{(k)})\right)=
\sum\limits_{\sigma\in\mathcal G}\sigma(\partial_l\xi_0^{(k)})=
\sum\limits_{\sigma\in\mathcal G}\sigma(\partial_k\xi_0^{(l)})=
\partial_k\left(\sum\limits_{\sigma\in\mathcal G}\sigma(\xi_0^{(l)})\right).
\]
These show that
\[
\partial_{x_i}S_j(y)=\partial_{x_j}S_i(y), \quad   2\le i,j\le n.
\]

Now for the integration $\int S_2(y)dx_2+\ldots+S_n(y)dx_n$ with $y=(x_2,\ldots,x_n)$ we are in the same conditions as those of integration
$\int U_1(y)dx_1+\ldots+U_n(y)dx_n$, so working in a similar way as that in the above proof we get that there exists a function $\Psi_1(y)$ such that
\[
\partial_l\Psi_1(y)=S_l(y), \qquad l=2,\ldots,n,
\]
and
\[
\Psi_1(y)=\sum\limits_{i=1}\limits^{r_1}d_i\log T_i(y)+T(y)+\int W_3(z)dx_3+\ldots+W_n(z)dx_n,
\]
where $d_i\in\mathbb C$, $T_i,\,T\in\mathbb C(y)$, and $W_i\in\mathbb C(z)$ with $z=(x_3,\ldots,x_n)$ satisfy
\[
\partial_{x_i}W_j(z)=\partial_{x_j}W_i(z),\qquad 3\le i,\, j\le n.
\]

By induction we can prove that
\begin{eqnarray*}
\Psi(x)&=&\sum\limits_{i=1}\limits^{r_0}c_i\log R_i(x)+R(x)\\
&& +\sum\limits_{j=2}^n\left(\sum\limits_{i=1}\limits^{r_j}c_i^{(j)}\log R_i^{(j)}(x_j,\ldots,x_n)+R^{(j)}(x_j,\ldots,x_n)\right),
\end{eqnarray*}
where $c_i,c_i^{(j)}\in \mathbb C$, and $R_i^{(j)},R^{(j)}\in \mathbb C(x_j,\ldots,x_n)$.  Recall that $\partial_l\Psi(x)=U_l(x)$ for $l=1,\ldots,n$.
Furthermore we have
\begin{eqnarray*}
\exp\left(\Psi\right)&=&\exp\left(R(x)+\sum\limits_{j=2}^n R^{(j)}(x_j,\ldots,x_n)\right)\\
&&\times \prod\limits_{i=1}\limits^{r_0} \left(R_i(x)\right)^{c_i}\prod\limits_{j=2}^n\prod\limits_{i=1}^{r_j}\left(R_i^{(j)}(x_j,\ldots,x_n)\right)^{c_i^{(j)}}.
\end{eqnarray*}
Since $R,\, R^{(j)},\, R_i,\, R_i^{(j)}\in \mathbb C(x)$, it follows that $\exp(\Psi(x))$ is a Darboux function, and consequently is a Darboux Jacobian multiplier. This proves Lemma  \ref{l2.2.2}.
\end{proof}

\smallskip

Summarizing Lemmas  \ref{lth21} and \ref{l2.2.2}, we complete the proof of Theorem \ref{th2}.

\medskip


\noindent{\bf Acknowledgements.} The author is partially supported by
NNSF of China grant 11271252,  RFDP of Higher Education of China grant 20110073110054, and FP7-PEOPLE-2012-IRSES-316338 of Europe.

\bigskip
\bigskip
\bibliographystyle{amsplain}

\end{document}